\def\author@andify{%
  \nxandlist {\unskip ,\penalty-1 \space\ignorespaces}%
    {\unskip {} \@@and~}%
    {\unskip \penalty-2 \space \@@and~}%
}
\renewcommand{\andify}{%
  \nxandlist{\unskip, }{\unskip{} \@@and~}{\unskip \space \@@and~}}
\newtheorem{thm}{Theorem}[section]
\newtheorem{thrm}{Theorem}[section]
\newtheorem{cor}[thm]{Corollary}
\newtheorem{prop}[thm]{Proposition}
\newtheorem*{prop*}{Proposition}
\newtheorem{lem}[thm]{Lemma}
\newtheorem*{thm*}{Theorem}
\newtheorem*{lem*}{Lemma}
\theoremstyle{definition}
\newtheorem{exmp}[thm]{Example}
\newtheorem{notn}[thm]{Notation}
\newtheorem*{defn*}{defn}
\newtheorem*{fact*}{Fact}
\newtheorem{remk}[thm]{Remark}
\newtheorem*{rem*}{Remark}
\newtheorem*{rems*}{Remarks}
\newcommand{\Q}{\mathbb{Q}}
\newcommand{\Z}{\mathbb{Z}}
\newcommand{\C}{\mathbb{C}}
\DeclareMathOperator{\Isom}{Isom}
\DeclareMathOperator{\Spec}{Spec}
\DeclareMathOperator{\Br}{Br}
\DeclareMathOperator{\Tr}{Tr}
\newcommand{\GL}{\mathrm{GL}}
\newcommand{\SL}{\mathrm{SL}}
\newcommand{\PSL}{\mathrm{PSL}}
\newcommand{\PGL}{\mathrm{PGL}}
\newcommand{\et}{\mathrm{\acute et}}
\newcommand{\sh}[1]{\mathcal{#1}}
\newcommand{\tensor}{\otimes}
\newcommand{\Hoh}{\mathrm{H}}
\newcommand{\op}{\mathrm{op}}
\newcommand{\iso}{\cong}
\newcommand{\Mat}{\operatorname{Mat}}
\newcommand{\Nrd}{\operatorname{Nrd}}
\newcommand{\Trd}{\operatorname{Trd}}
\newcommand{\sm}{\setminus}
\newcommand{\id}{\mathrm{id}}
\newcommand{\End}{\operatorname{End}}
\newcommand{\PO}{\mathrm{PO}}
\newcommand{\fppf}{\ifmmode\mathrm{fppf}\else fppf\ \fi}
\begin{document}

\title{Extending degree-$2$ Azumaya algebras with $C_2$-actions and examples from character varieties of knot groups}
\author{Justin Lawrence}
\address{Department of Mathematics\\
  The University of British Columbia\\
  BC V6T 1Z2\\
  CANADA}
\author{Nicholas Rouse}
\author{Ben Williams}
\email[J.~Lawrence]{justin@math.ubc.ca}
\email[B.~Williams]{tbjw@math.ubc.ca}

\begin{abstract}
  We give criteria to determine when a degree-$2$ Azumaya algebra with $C_2$-action over a dense open subvariety of a curve extends to the entire curve as an algebra with $C_2$-action. These consist of conditions for the extension of the algebra, combined with a new condition for the extension of the algebra with the action. The new condition is testable by computer algebra systems, and we explain how the result applies to the canonical components of the character varieties of certain hyperbolic knots with order-$2$ symmetries. We conclude by carrying out the calculations for different symmetries of the Figure-$8$ knot.
\end{abstract}
\subjclass{16H05, 57K31}
\keywords{Azumaya algebras with $C_2$-action, character varieties, tautological algebras}
\thanks{We acknowledge the support of the Natural Sciences and Engineering Research Council of Canada (NSERC), RGPIN-2021-02603.}

\maketitle

Let $W \subseteq Y$ denote a scheme and a dense open subscheme. If $\sh A$ is an Azumaya algbera, i.e., an \'etale sheaf of matrix algebras, defined over $W$, one may ask whether $\sh A$ extends to an Azumaya algebra over $Y$. This question arises in the study of the $\SL(2)$-character variety of a hyperbolic knot group $\Gamma$, as detailed in \cite{Chinburg2020}. In this application, $Y$ is a canonical component of the character variety, and is therefore a $1$-dimensional curve defined over a number field. The subvariety $W$ is the dense open subvariety where the representation is irreducible, and the algebra $\sh A$ over $W$ is tautologically defined. There is a deep, and mysterious, conjecture \cite[Conjecture 6.9]{Chinburg2020} that the algebra $\sh A$ extends if and only if the knot in question is not an $L$-space knot.

In general, canonical components of character varieties of hyperbolic knot groups are complicated entities, being curves of high degree. In \cite{Boyle2024}, the authors exploit the symmetry of symmetric (in this case, freely periodic) hyperbolic knots to make deductions about the canonical components of the character varieties of the knot groups. We hope that, in general, character varieties of symmetric knots may be more tractable by virtue of the symmetry of the knot.

This paper considers the following situation: suppose as before that $W \subseteq Y$ denotes a scheme and a dense open subscheme, and that $\sh A$ is an Azumaya algebra of degree $2$ over $W$. Suppose further that $\sh A$ carries a nontrivial action by $C_2$, the cyclic group of order $2$, i.e.,  there is a nontrivial order-$2$ self map $g: \sh A \to \sh A$ of algebras over $W$. One may ask whether $\sh A$ extends as an algebra with $C_2$-action over all of $Y$. An obvious necessary condition is that $\sh A$ should extend over $Y$ disregarding the action, but this is not sufficient. In the case where $Y$ is a regular integral curve over a field of characterstic different from $2$, we can describe all further obstructions to the extension: they are certain square-classes in the residue fields of codimension-$1$ points of $Y \sm W$. The precise statement is in Theorem \ref{th:main} below.

We then give example calculations where $Y$ is a canonical component of a hyperbolic knot group carrying a $C_2$-action and $W$ is the subvariety of characters of absolutely irreducible representations. The algebra $\sh A$ over $W$ in this case is the tautological algebra, i.e., the sheaf whose stalk over a geometric point $w$ in $W$ is the target of a representation whose character is $w$. This algebra is constructed in a different way in \cite[Prop.\ 4.1]{Chinburg2020}, and we devote Section \ref{sec:examples} to constructing this algebra  $\sh A$ in such a way that it is clear it carries a $C_2$ action, and to producing a symbol-algebra presentation of the pullback of $\sh A$ to the fraction field $\Spec F \to W$. In Section \ref{sec:knots}, we use the geometry of the Figure-8 knot to give an example of such an algebra with $C_2$-actions over $W$ that both do and also do not extend over all of $Y$.

\subsection*{Acknowledgements}
\label{sec:acknowledgments}

We are grateful to Mike Bennett for helping us run Magma code. We are grateful to Keegan Boyle for conversations about and help in understanding symmetric knots.

\section{The structure group}
\label{sec:structure-group}

Let $W$ be a scheme. A \emph{degree-$2$ Azumaya algebra} (see \cite[\S 5]{Grothendieck1968}) on $W$ is a sheaf of algebras $\mathcal{A}$ that is locally isomorphic in the étale topology to $\Mat_{2\times 2}(\sh O_W)$: that is, for every $w \in W$, there exists an open $U \ni w$ and a finite étale map $U' \to U \subseteq W$ for which the pullback of $\sh A$ to $U'$ is isomorphic to $\Mat_{2 \times 2}(\sh O_{U'})$. In this circumstance, one says $U'$ is an \emph{étale neighbourhood} of $w$.

A \emph{$C_2$-action} on a degree-$2$ Azumaya algebra $\sh A$ is a map  $g : \sh A \to \sh A$ of sheaves of algebras on $W$ for which $g^2=\id_{\sh A}$. We emphasize that $g$ preserves the order of multiplication, and is therefore not an involution of Azumaya algebras.

\begin{remk}
  In our $C_2$-action, the underlying scheme $W$ is fixed. One can give a more general definition, in which $W$ itself has a $C_2$-action and $\sh A$ has a compatible action. This is much more complicated, and not considered further here.
\end{remk}

An Azumaya algebra over $W$ is necessarily a coherent sheaf. If $R$ is a ring, then an Azumaya algebra $\sh A$ on $\Spec R$ determines and is determined by the $R$-algebra of global sections. In a standard abuse of notation, we write $\sh A$ for this $R$-algebra.

In \cite[Thm.\ 5.1]{Grothendieck1968}, a number of different characterizations of Azumaya algebras over schemes are given. In the case of an affine scheme $\Spec R$, one such characterization amounts to the following: $\sh A$ is an $R$-algebra that is finitely generated and projective as an $R$-module, and for which the canonical map $\sh A \tensor_R \sh A^{\op} \to \End_R(\sh A)$ is an isomorphism. This is the definition used by \cite{Auslander1960}.

In the case where $R=F$ is a field, an Azumaya algebra over $R$ is a central simple algebra over $F$ and \textit{vice versa}. In particular, a degree-$2$ Azumaya algebra $A$ over a field $F$ is either a $4$-dimensional central division algebra over $F$ (the nonsplit case), or is isomorphic to $\Mat_{2 \times 2}(F)$ (the split case). In either case, we will call $A$ a \emph{quaternion algebra} over $F$.

\begin{exmp}\label{ex:standardAction}
  We define $J =
  \begin{bmatrix}
    0 & 1 \\ 1 & 0
  \end{bmatrix}$. Over any scheme $W$, there is a trivial degree-$2$ Azumaya algebra $\Mat_{2 \times 2}(\sh O_W)$, which can be endowed with the $C_2$-action $\iota$ given by conjugating by $J$.
\end{exmp}
Our first observation is that all nontrivial $C_2$-actions on degree-$2$ Azumaya algebras are étale locally isomorphic to that of Example \ref{ex:standardAction}.

\begin{prop} \label{pr:onlyOneOrder2ActionSHL}
  Let $W$ be a connected scheme on which $2$ is invertible. Let $\sh A$ be a degree-$2$ Azumaya algebra over $W$ and $g : \sh A \to \sh A$ a nontrivial order-$2$ isomorphism of $\sh A$. Locally in the étale topology, the algebra with $C_2$-action $(\sh A, g)$ is isomorphic to $(\Mat_{2 \times 2}(\sh O_W), \iota)$.
\end{prop}
\begin{proof}
  First we make a preliminary observation about nontriviality of the action. Since $2$ is invertible, we may decompose $\sh A$ as a locally free sheaf $\sh A_+ \oplus \sh A_{-}$, where $g$ acts as $\pm 1$ on $\sh A_{\pm}$. Since $W$ is connected , the ranks of the locally free sheaves $\sh A_+$ and $\sh A_-$ are constant. In particular, this implies that $g$ acts nontrivially on every stalk of $\sh A$.

  Let $w \in W$ be a point. We will prove that there is an étale neighbourhood $U'$ of $w$ such that the pullback of $(\sh A, g)$ to $U'$ is isomorphic as an algebra with automorphism to $\Mat_{2 \times 2}(\sh O_{U'}, \iota)$. We may suppose already that we have passed to an étale neighbourhood of $w$ over which $\sh A$ is isomorphic to a matrix algebra. That is to say, we may safely suppose $\sh A = \Mat_{2\times 2}(\sh O_W)$.

  Let $A$ denote the pullback of $\sh A$ to the (spectrum of the) residue field $\kappa(w)$ of $w$. The Skolem--Noether theorem tells us that the automorphism induced by $g$ on $A$ is given by conjugation by some element of $\GL(2; \kappa(w))$. This matrix is actually defined in some subring of $\kappa(w)$ that defines an affine open neighbourhood of $w$. That is, by passing to some neighbourhood $U_0$ of $w$, we may suppose that the automorphism $g$ is given by conjugation by some invertible matrix $C \in GL(2;\sh O_{U_0})$. Since $g^2 = \id$, it must be the case that $C^2= d I_2$ where $d\in \sh O^\times_{U_0}$. Working étale-locally we can assume that $d$ has a square-root on some étale neighbourhood $U_1$ of $w$, and so replace $C$ by a matrix satisfying $C^2 = I_2$, but where $C \neq I_2$ in each stalk. In a neighbourhood of $w$, the matrix $C$ may be brought to rational canonical form, which must be $C = J$ as required.
\end{proof}

Proposition \ref{pr:onlyOneOrder2ActionSHL} implies that isomorphism classes degree-$2$ Azumaya algebras with $C_2$-action on $W$ are given by the pointed set $\Hoh^1_\et(W; Q)$ where $Q$ is the automorphism group of $(\Mat_{2 \times 2}, \iota)$, a scheme of algebras-with-$C_2$-action over $\Z[1/2]$. The automorphism group of the algebra-scheme $\Mat_{2 \times 2}$ is the linear algebraic group $\PGL(2)$ acting by conjugation. The group $Q$ is the closed subgroup $\PGL(2)$ consisting of automorphisms that commute with $\iota$. If $k$ is a field of characteristic different from $2$, then the $k$-valued points of $Q$ are $k^\times$-equivalence classes of matrices 
$\begin{bmatrix}
  a & b \\ c & d
\end{bmatrix}$ over $k$
satisfying
\[
  \begin{bmatrix}
    a & b \\ c & d 
  \end{bmatrix}
  \begin{bmatrix}
    0 & 1 \\ 1 & 0 
  \end{bmatrix} = f \begin{bmatrix}
    0 & 1 \\ 1 & 0 
  \end{bmatrix} \begin{bmatrix}
    a & b \\ c & d 
  \end{bmatrix}\]
for some scalar $f \in k^\times$. Multiplying out, we see that $f = \pm 1$ and $a=fd$ and $b=fc$. The linear algebraic group $Q$ is a disconnected group of two components, and over an algebraically closed field is isomorphic to $\PO(2)$.

Being disconnected, $Q$ is not reductive according to the definition used in \cite{Colliot-Thelene1979}. Nonetheless, it is reductive in the sense of \cite{Mumford1994}, and so $Q \times X$ over $X$ meets the condition of \cite[Rem.\ 6.15]{Colliot-Thelene1979}, which in this case requires that there exist some principal $Q$-bundle $\GL(N) \to B$ for some integer $N$. Certainly $Q$ may be embedded as a closed subgroup in $\GL(4)$, and it is reductive since we assume the characteristic of $k$ is not $2$. The fact that $\GL(4) \to \GL(4)/Q$ is a principal $Q$-bundle now follows from \cite[Prop.\ 0.9 \& Amplification 1.3]{Mumford1994}.

\section{The local calculation}
\label{sec:local-calculation}

If $W$ is a scheme, $W^{(1)}$ denotes the set of codimension-$1$ points of $W$, i.e., those points $w \in W$ for which $\dim \sh O_{W,w}=1$. The following result is a variant of \cite[Cor.\ 6.14]{Colliot-Thelene1979}.

\begin{prop}[{\cite{Colliot-Thelene1979}}]\label{pr:CTS}
  Let $Y$ be a regular integral scheme of dimension $2$ or less, and let $W$ be a dense open affine subscheme. Write $F$ for the function field of $Y$. Let $\alpha$ be a class in $\Hoh^1_\et(W; Q)$. Then $\alpha$ is in the image of the restriction map $\Hoh^1_\et(Y; Q) \to \Hoh^1_\et(W; Q)$ if and only if its restriction to $\Hoh^1_\et(F; Q)$ along $\Spec F \to W$ is in the image of the map $\Hoh^1_\et(\Spec \sh O_{Y,y}; Q)  \to \Hoh^1_\et(\Spec F; Q)$ for all $y \in Y^{(1)}\sm W^{(1)}$.
\end{prop}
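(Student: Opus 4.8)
The plan is to separate the two implications and put essentially all the work into the converse. The forward direction is formal: if $\alpha$ is the restriction of some $\tilde\alpha \in \Hoh^1_\et(Y; Q)$, then because the canonical maps $\Spec F \to \Spec \sh O_{Y,y} \to Y$ and $\Spec F \to W \to Y$ coincide, the class $\alpha|_F$ is simultaneously the image of $\tilde\alpha$ restricted to $\Spec \sh O_{Y,y}$ and the image of $\tilde\alpha$ restricted to $W$; in particular it lies in the stated image for every $y$. I would dispatch this in a sentence.

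For the converse, the starting observation is that a codimension-$1$ point $y$ of the regular scheme $Y$ has local ring a discrete valuation ring whose punctured spectrum is exactly $\Spec F$. Thus the hypothesis that $\alpha|_F$ lifts to $\Hoh^1_\et(\Spec \sh O_{Y,y}; Q)$ says precisely that $\alpha$ extends across the generic point of the divisor $\overline{\{y\}}$. Since $Y$ is Noetherian, the complement $Y \sm W$ has only finitely many codimension-$1$ points $y_1, \dots, y_n$, and I would extend $\alpha$ in two stages: first across these divisorial points, obtaining a class over an open $W_1 \supseteq W$ with $\operatorname{codim}(Y \sm W_1) \ge 2$; then across the remaining codimension-$\ge 2$ locus to all of $Y$. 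When $\dim Y \le 1$, as in the knot-theoretic application, the second stage is vacuous and only the gluing over closed points remains.

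Both stages are instances of the results of \cite{Colliot-Thelene1979}, whose one gap for us is that they are phrased for connected reductive groups while $Q$ is disconnected. This is exactly what the paragraph preceding the statement repairs: $Q$ is reductive in the sense of \cite{Mumford1994}, and by \cite[Rem.\ 6.15]{Colliot-Thelene1979} the principal bundle $\GL(N) \to B$ exhibited there linearizes the problem. Concretely, the associated exact sequence of pointed sets
\[
  \GL_N(T) \to B(T) \to \Hoh^1_\et(T; Q) \to \Hoh^1_\et(T; \GL_N)
\]
presents a $Q$-torsor on $T$ as a rank-$N$ vector bundle equipped with a reduction of structure group to $Q$, i.e.\ locally a morphism $T \to B$ into the affine homogeneous space $B = \GL_N/Q$ (affine because $Q$ is reductive). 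I would then extend each ingredient separately: a vector bundle extends across codimension $\ge 2$ on a regular scheme of dimension $\le 2$ because its reflexive hull is locally free, and a morphism into the affine scheme $B$ extends because regularity forces the global functions to be unchanged across a codimension-$\ge 2$ locus; the codimension-$1$ extensions are supplied locally by the lifts over the $\sh O_{Y,y_i}$. This is the mechanism by which the argument of \cite{Colliot-Thelene1979} for connected groups is ported to $Q$.

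I expect the main obstacle to be the gluing in the first stage, and specifically the compatibility of the two halves of the linearization during patching. The generic identifications $\beta_i|_F = \alpha|_F$ only give agreement at the generic point, which is too weak to patch torsors directly; the point of passing to $(\GL_N, B)$ is that a vector bundle is determined by its restriction to $W$ together with a lattice at each $y_i$ matching generically (a Beauville--Laszlo type patching), while a morphism to the affine $B$ extends as soon as each of its coordinate functions lies in every $\sh O_{Y,y_i}$, which is precisely the hypothesis. Assembling these two extensions into a single extended $Q$-torsor, and checking that the result genuinely restricts back to $\alpha$ on $W$ rather than to a class that merely agrees with it generically, is the step I would treat most carefully; it is here that the finite presentation of $Q$ (needed to spread the $\beta_i$ out to honest neighbourhoods) and the exactness of the displayed sequence do the essential bookkeeping.
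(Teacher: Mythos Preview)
Your forward direction matches the paper's exactly. For the converse, the paper's proof is essentially a citation: it observes that \cite[Cor.~6.13]{Colliot-Thelene1979} guarantees that \cite[Prop.~6.8]{Colliot-Thelene1979} applies to $Y$, and then notes that $\alpha|_F$ and $W$ can play the roles of the class $\xi$ and the open $V_0$ in the iterative-enlargement proof of that proposition. Your proposal is not a different route so much as an attempt to unpack what that Colliot-Th\'el\`ene--Sansuc argument actually does---linearize via $Q \hookrightarrow \GL_N$, treat the underlying vector bundle and the reduction datum separately, and handle codimension~$1$ and codimension~$\ge 2$ in stages. So the two agree in substance; the paper simply defers to the source rather than re-deriving it.

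If you do pursue the unpacked version, one imprecision needs repair. The reduction datum attached to a $Q$-torsor is a global section of the associated bundle $P \times^{\GL_N} B \to T$, not a morphism $T \to B$; it becomes a morphism to $B$ only once the $\GL_N$-torsor $P$ is trivialized. Consequently your sentence ``a morphism to the affine $B$ extends as soon as each of its coordinate functions lies in every $\sh O_{Y,y_i}$, which is precisely the hypothesis'' elides a genuine step: the hypothesis produces, for each $y_i$, a section over $\Spec \sh O_{Y,y_i}$ that agrees with yours over $\Spec F$ only up to the action of $\GL_N(F)$, not on the nose, so you cannot simply read off that your coordinate functions are integral at $y_i$. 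The patching in \cite{Colliot-Thelene1979} deals with this by first trivializing the $\GL_N$-torsor over a suitable semilocal open (using that $\Hoh^1$ of a semilocal regular ring with coefficients in $\GL_N$ vanishes) and then absorbing the $\GL_N(F)$-ambiguity into a change of frame before invoking affineness of $B$. You correctly flag this gluing as the delicate step, but your sketch as written does not yet close it.
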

\begin{proof}
  The only-if direction is easy: suppose there exists a class $\alpha_Y \in \Hoh^1_\et(Y; Q)$ that restricts to $\alpha$. Then by functoriality, in the square
  \[
    \begin{tikzcd}
      \Hoh^1_\et(Y;Q) \rar \dar & \Hoh^1_\et(W;Q) \dar \\
      \Hoh^1_\et(\Spec \sh O_{Y,y}; Q) \rar & \Hoh^1_\et(\Spec F; Q) 
    \end{tikzcd},
  \] its restriction to $F$ is in the image of the map $\Hoh^1_\et(\sh O_{Y,y}; Q) \to \Hoh^1_\et(F; Q)$.
  
In the other direction, the result is implicit in \cite[\S6]{Colliot-Thelene1979}. First of all, since $Y$ is a regular integral scheme of dimension $2$ or less, \cite[Cor.~6.13]{Colliot-Thelene1979} applies, which says that \cite[Prop.~6.8]{Colliot-Thelene1979} holds for this $Y$. That proposition says something stronger than what we assert.

The hypothesis on $\alpha$ implies that its restriction $\alpha_F$ to $F$ can serve as a class $\xi$ in the proof of \cite[Prop.~6.8]{Colliot-Thelene1979}. Furthermore, our $W$ can serve as $V_0$ in the proof of that proposition. That proof then proceeds to enlarge the subset of $Y$ on which $\xi$ is defined iteratively, until it is defined on $Y$ itself, so that a class $\alpha_Y \in \Hoh^1_\et(Y;Q)$ is produced that maps to $\alpha \in \Hoh^1(W;Q)$.
\end{proof}

Suppose $Y$ is a regular integral curve, $W$ is a dense open subvariety, and $(\sh A,g)$ is an Azumaya algebra with $C_2$-action defined on $W$. By using \ref{pr:CTS}, we reduce the problem of extending $(\sh A, g)$ to all of $Y$ to a (finite) set of local problems: define $(A,g)$ to be the restriction of $(\sh A, g)$ to the fraction field of $W$ and $Y$. For each point $y \in Y\sm W$, determine whether one can extend $(A,g)$ to the local ring $\sh O_{Y,y}$, which is a discrete valuation ring in $F$.

\begin{notn}
  If $R$ is a domain with field of fractions $F$, and $A$ is a quaternion algebra over $F$---which we allow to be split, i.e., isomorphic to $\Mat_{2 \times 2}(F)$---then an \emph{$R$-order} in $A$ is an $R$-subalgebra $O$ of $A$ with the property that $F \tensor_R O = A$. See \cite[Ch.~10]{Voight2021} for instance.
\end{notn}

Let $R$ be a domain with field of fractions $F$. Suppose $(A,g)$ is a quaternion algebra with $C_2$-action over a field $F$. The question of whether the associated cohomology class $[A,g] \in \Hoh^1(\Spec F; Q)$ is in the image of $\Hoh^1(\Spec R; Q) \to \Hoh^1(\Spec F; Q)$ is precisely the question of whether $(A,g)$ admits a $g$-invariant order $O$ that is Azumaya over $R$. This may in general be a difficult question to answer, but when $R$ is a discrete valuation ring, we can answer it.

The following lemma is an amalgamation of results of Auslander and Goldman. We state it here for ease of reference.
\begin{lem} \label{lem:AusGold}
  Suppose $R$ is a discrete valuation ring (DVR) and $F$ is its field of fractions. If $A$ is a central simple algebra of degree $n$ over $F$ such that the Brauer class of $A$ is in the image of $\Br(R) \to \Br(F)$, then every maximal order in $A$ is Azumaya over $R$.
\end{lem}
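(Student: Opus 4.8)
The plan is to reduce the statement to a structural fact about maximal orders over a \emph{complete} DVR, where the situation is transparent, and then descend. The hypothesis that $[A] \in \im(\Br(R) \to \Br(F))$ furnishes an Azumaya $R$-algebra $B$ together with a Brauer equivalence $[B \tensor_R F] = [A]$; writing $D$ for the division algebra in this common Brauer class, we have $A \iso \Mat_r(D)$ and $B \tensor_R F \iso \Mat_s(D)$ for suitable $r,s$. The role of $B$ is to certify that $D$ is unramified at $R$ in a usable sense: by Auslander--Goldman an Azumaya algebra over the normal domain $R$ is a maximal $R$-order in the algebra it spans over $F$, so $B$ is a maximal order in $\Mat_s(D)$, and its reduction $B \tensor_R k$ modulo the maximal ideal is central simple over the residue field $k$.

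First I would record the two inputs I intend to cite. First, that Azumaya algebras over $R$ are exactly the maximal orders whose reduction mod $\mathfrak m$ is central simple over $k$; equivalently, being Azumaya is Morita-invariant and is detected after the faithfully flat base change $R \to \hat R$ to the completion. Second, that forming maximal orders commutes with completion: if $\Lambda$ is a maximal $R$-order in $A$ then $\hat\Lambda = \Lambda \tensor_R \hat R$ is a maximal $\hat R$-order in $\hat A = A \tensor_F \hat F$, and likewise $\hat B = B \tensor_R \hat R$ is Azumaya over $\hat R$ with $\hat B \tensor_{\hat R} \hat F$ Brauer-equivalent to $\hat A$. Thus after completing I am in the same situation over a complete DVR, and it suffices to prove the statement there and then descend the Azumaya property along the faithfully flat map $R \to \hat R$.

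Over the complete DVR $\hat R$ the geometry simplifies: the division algebra $\hat D$ in the class of $\hat A$ carries a unique maximal order $\mathcal O_{\hat D}$, namely its valuation ring, and every maximal order of $\hat A = \Mat_r(\hat D)$ is conjugate to $\Mat_r(\mathcal O_{\hat D})$. The order $\hat B$ is a maximal order in $\Mat_s(\hat D)$, hence of the form $\End_{\mathcal O_{\hat D}}(M)$ for a progenerator lattice $M$, and so is Morita-equivalent over $\hat R$ to $\mathcal O_{\hat D}$; since $\hat B$ is Azumaya and the Azumaya property passes across Morita equivalence, $\mathcal O_{\hat D}$ is itself Azumaya. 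Consequently $\Mat_r(\mathcal O_{\hat D})$ is Azumaya, and so is every conjugate of it, so an arbitrary maximal order $\hat\Lambda$ in $\hat A$ is Azumaya over $\hat R$. Descending along $R \to \hat R$ then shows the original maximal order $\Lambda$ is Azumaya over $R$.

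The main obstacle is bridging the gap between the hypothesis, which concerns only the Brauer \emph{class} of $A$ and therefore a priori produces an Azumaya order inside some matrix algebra over $D$ rather than inside $A$ itself, and the conclusion, which asserts that \emph{every} maximal order in $A$ is Azumaya. The device that closes this gap is the combination of Morita invariance of the Azumaya property with the uniqueness up to conjugacy of maximal orders over a complete DVR: together they transport the Azumaya-ness of $B$ first to the canonical order $\mathcal O_{\hat D}$ of the underlying division algebra and then out to every maximal order of $A$. In carrying this out I would take care to verify that the maximal order $\Lambda$ is finitely generated and projective (indeed free) over the DVR $R$, so that the completion and descent steps apply, and that the Morita equivalences in play are implemented by genuine progenerators, so that they really do preserve the Azumaya property.
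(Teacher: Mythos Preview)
Your argument is correct, and the underlying mechanism---transporting the Azumaya property from the given order $B$ to an arbitrary maximal order via Morita equivalence and conjugacy of maximal orders---is the same as the paper's. The implementations differ, however. You pass to the completion $\hat R$, invoke the explicit structure theory there (unique maximal order $\mathcal O_{\hat D}$ in the division algebra, all maximal orders in $\Mat_r(\hat D)$ conjugate to $\Mat_r(\mathcal O_{\hat D})$), and then descend the Azumaya property along the faithfully flat map $R\to\hat R$. The paper instead works directly over $R$, citing Auslander--Goldman for two facts: that any two maximal $R$-orders in a central simple $F$-algebra are already conjugate over the DVR $R$ (no completion needed), and that $\Mat_n(O)$ being Azumaya forces $O$ to be Azumaya. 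So the paper arranges $B\otimes_R F\cong A\otimes_F\Mat_{n\times n}(F)$, observes that $B$ and $\Mat_n(O)$ are both maximal orders in this algebra and hence conjugate, and concludes. Your route trades a heavier dependence on the Auslander--Goldman papers for the more widely-known local structure theory plus a descent step; the paper's route is shorter but leans more on specific citations. Both are fine, and the conceptual content is the same.
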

\begin{proof}
  Choose a maximal $R$-order $O$ for $A$. It is the case that $O$ is a projective $R$-module by \cite[Prop.~1.3 and the Corollary to Prop.~4.7]{Auslander1960}. 
  
  It remains to show that $O$ is central and separable over $R$. The Brauer group condition on $[A]$ assures us that there exists some algebra $B$ that is central and separable over $R$ and for which $B \tensor_{R} F$ is isomorphic to $A \tensor_F \Mat_{n \times n}(F)$ for some integer $n$. Choose an identification $B \tensor_{R} F = A \tensor_K\Mat_{n \times n}(F)$. This makes $B$ into a maximal order in $A \tensor_F \Mat_{n \times n}(F)$, and $\Mat_{n \times n}(O)$ is also a maximal order in this $F$-algebra (see  \cite[Thm 3.6]{Auslander1960}), so that \cite[Prop.~3.5]{Auslander1960} tells us these orders are conjugate. Then \cite[Thm 3.5]{Auslander1960a} implies that $O$ is central and separable over $R$, completing the proof.
\end{proof}

\begin{prop} \label{pr:DVR} Let $R$ be a DVR, with fraction field $F$ of characteristic different from $2$ and residue field $\kappa$. Let $(A,g)$ be a quaternion algebra with $C_2$-action over $F$. Necessarily the action of $g$ on $A$ is given by conjugation by some element $a \in A$. Then $(A,g)$ admits a $C_2$-invariant Azumaya order if and only if both the following hold:
  \begin{enumerate}
  \item \label{i:1}  The Brauer class of $A$ is in the kernel of the residue map $\Br(F) \to \Hoh^1_\et(\kappa; \Q/\Z)$;
  \item \label{i:2} The valuation of the reduced norm $\Nrd(a)$ is even.
  \end{enumerate}  
\end{prop}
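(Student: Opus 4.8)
The plan is to translate the existence of a $C_2$-invariant Azumaya order into a statement about which maximal orders are normalized by the implementing element $a$, and then to read off both conditions from the local structure theory of quaternion orders. First I would normalize $a$. Since $g=\cnj_a$ satisfies $g^2=\id$, the element $a^2$ is central, so $a^2=c\in F^\times$, and because $g$ is nontrivial we have $a\notin F$; comparing the reduced characteristic polynomial of $a$ with its minimal polynomial $x^2-c$ gives $\Trd(a)=0$ and $\Nrd(a)=-c$. As $g$ determines $a$ only up to $F^\times$ and $\Nrd(\lambda a)=\lambda^2\Nrd(a)$, the parity of the valuation $v(\Nrd(a))$ is well defined, so condition \eqref{i:2} is intrinsic to $(A,g)$. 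The key elementary observation is that $a^{-1}=\bar a/\Nrd(a)=c^{-1}a$, so once $\Nrd(a)\in R^\times$ the element $a$ is a unit in every order containing it; hence any maximal order $O$ with $a\in O$ automatically satisfies $aOa^{-1}=O$, i.e.\ is $g$-invariant.

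For the necessity direction, suppose $(A,g)$ has a $g$-invariant Azumaya order $O$. Forgetting $g$, $O$ is an Azumaya $R$-order, so $[A]$ lies in the image of $\Br(R)\to\Br(F)$, which by the standard residue sequence for a DVR is exactly the kernel of $\Br(F)\to\Hoh^1_\et(\kappa;\Q/\Z)$; this is \eqref{i:1}. For \eqref{i:2} I would pass to the completion $\hat R$, over which $\hat O$ is a maximal order normalized by $a$. In the split case $\hat A\iso\Mat_{2\times 2}(\hat F)$ the maximal order is conjugate to $\Mat_{2\times 2}(\hat R)$, whose normalizer in $\GL(2;\hat F)$ is $\hat F^\times\GL(2;\hat R)$; every such element has reduced norm in $(\hat F^\times)^2\,\hat R^\times$, so $v(\Nrd(a))$ is even. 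In the division case condition \eqref{i:1} forces $\hat A$ to be unramified, so the value group of $v\circ\Nrd$ on $\hat A^\times$ is $2\Z$ and $v(\Nrd(a))$ is again even.

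For sufficiency, assume \eqref{i:1} and \eqref{i:2}. By \eqref{i:1} and Lemma \ref{lem:AusGold} every maximal $R$-order of $A$ is Azumaya, so it suffices to produce a $g$-invariant maximal order. By \eqref{i:2} the valuation $v(c)=v(\Nrd(a))$ is even, so after replacing $a$ by $\pi^{-k}a$ for a suitable $k$ (which does not change $g$) I may assume $c\in R^\times$, whence $\Nrd(a)=-c\in R^\times$ and $a$ is integral over $R$ with $a^{-1}=c^{-1}a$. Because $a$ is integral, it lies in some maximal $R$-order $O$ of $A$; by the observation of the first paragraph such $O$ is automatically $g$-invariant, and by \eqref{i:1} it is Azumaya, which completes the construction.

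The main obstacle is assembling the two structural inputs cleanly: the existence of a maximal $R$-order containing the integral element $a$ (a local embedding statement for quaternion orders), and the reduced-norm computations inside the normalizer of a maximal order, which rest on the Bruhat--Tits/lattice description in the split case and on the value group of the valuation in the unramified division case. Both of these require knowing that an Azumaya order over the DVR is maximal, so that completing reduces the problem to the well-understood normalizer structure. I expect the verification of the embedding of $a$ into a maximal order, and the bookkeeping of the reduced norm on the normalizer, to be where the real work lies; by contrast, no genuine descent from $\hat R$ to $R$ is needed, since the sufficiency argument builds $O$ directly over $R$ and the necessity argument only extracts a parity that is insensitive to completion.
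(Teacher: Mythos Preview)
Your proposal is correct and follows the same overall strategy as the paper: both hinge on the observation that once $\Nrd(a)\in R^\times$, the element $a$ is a unit in any order containing it, so a maximal (hence Azumaya, by condition~\eqref{i:1} and Lemma~\ref{lem:AusGold}) order containing $a$ is automatically $g$-invariant. The differences are in the supporting arguments. For the necessity of~\eqref{i:2}, the paper stays over $R$ and uses that every automorphism of an Azumaya algebra over a \emph{local} ring is inner: if $a^{-1}Oa=O$ then conjugation by $a$ agrees with conjugation by some $b\in O^\times$, whence $a=fb$ with $f\in F^\times$ and $v(\Nrd(a))=2v(f)+v(\Nrd(b))=2v(f)$ is even. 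Your route via completion and the normalizer of a maximal order (split and unramified-division cases separately) is also valid, but the paper's argument is shorter and avoids the case split. For sufficiency, where you invoke as a black box the fact that an integral element of a quaternion algebra lies in some maximal order, the paper supplies this construction explicitly: in the split case by conjugating $a$ to rational canonical form inside $\Mat_{2\times 2}(R)$, and in the division case by producing a full $R$-lattice $L=R\oplus Ra\oplus Ri\oplus Rai$ with $aL\subseteq L$ and taking its left order. Your version is more conceptual and isolates cleanly which structural inputs carry the weight; the paper's is self-contained and dispatches directly the ``local embedding'' step you correctly flag as the main obstacle.
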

\begin{proof}
  Condition \ref{i:1} holds if and only if the Brauer class $[A]$ is in the image of the map $\Br(R) \to \Br(F)$. Therefore, if $A$ admits an Azumaya maximal order $O$, it is necessary and sufficient that the Brauer class of $A$ be in the kernel of $\Br(F) \to \Hoh^1_\et(\kappa; \Q/\Z)$.

  If $O$ is $C_2$-invariant, then $a^{-1}Oa = O$ gives an automorphism of $O$, which is an Azumaya algebra over a local ring. All automorphisms of such algebras are inner, so that there exists some $b \in O^\times$ such that conjugation by $a$ and by $b$ agree. Therefore $a=bf$ for some $f \in F$, and taking reduced norms gives $\Nrd(a) = f^2\Nrd(b)$. Since $b \in O^\times$, the valuation $v(\Nrd(b))$ is $0$ by \cite[Lemma 10.3.7]{Voight2021}, and so $\Nrd(a) \equiv 0 \pmod 2$.

   Conversely, suppose the class of $[A]$ is in the image of $\Br(R) \to \Br(F)$ and that $v(\Nrd(a)) \equiv 0 \pmod 2$, where $v$ is the valuation. By hypothesis, we can find some unit $u \in F^\times$ so that $\Nrd(ua) = u^2 \Nrd(a)$ lies in $R^\times$.  Since conjugation by $a$ and $ua$ give the same automorphism of $A$, and since $v(\Nrd(a)) \equiv v(\Nrd(ua)) \pmod 2$, we may suppose without loss of generality that $\Nrd(a) \in R^\times$, i.e., $v(\Nrd(a)) = 0$. We wish to construct an Azumaya maximal order $O$ in $A$ for which $a \in O^\times$.

  From $g^2 = \id$, we deduce that $a^2 \in F^\times1_A$. Since $a$ satisfies its minimal polynomial
  \[ x^2 - \Trd(a) x + \Nrd(a) 1_A = 0 \]
  but $a$ is not itself central in $A$, and therefore does not satisfy any linear equation with coefficients in $F$, we deduce that $a^2 = -\Nrd(a)1_A$ and $\Trd(a) = 0$. One says that the element $a \in A$ is \emph{integral} over $R$ (see, e.g., \cite[Def.~10.3.1]{Voight2021}).

  If $A$ is split, i.e., if we can identify $A = \Mat_{2\times 2}(F)$, then $O$ can be found directly. There exists some $s \in \GL(2;F)$ for which $s^{-1}as$ is in rational canonical form:
  \[ s^{-1}as =
    \begin{bmatrix}
      0 & 1 \\ \Nrd(c) & 0
    \end{bmatrix}. \]
  Then the $R$-order
  \[ O =  s \Mat_{2 \times 2}(R) s^{-1} \]
  Is seen immediately to contain $a$.

  Suppose $A$ is not split, which is to say, it is a quaternion division algebra. We claim that there is an $R$-order $B \subset A$ containing $a$. We may write $A$ as a symbol algebra, i.e., $A=F \oplus Fi \oplus Fj \oplus Fij$, where $i^2, j^2 \in F$ and $ij=-ji$. At least one of $i$, $j$ lies outside $F \oplus Fa$. Without loss of generality, $i$ does. It is not possible for both of $ia$ and $ai$ to lie in $F \oplus Fa \oplus Fi$, since $A$ has no $3$-dimensional $F$-subalgebras. Without loss of generality, suppose $F \oplus Fa \oplus Fi \oplus Fai = A$. Let $B$ be the \emph{left order} of the $R$-lattice $L =R \oplus Ra \oplus Ri \oplus Rai$, i.e., the set of $x \in A$ for which $x L \subseteq L$. This is an order of $A$, \cite[p.~109]{Reiner1975}, and contains $a$. If $ia$ is used in place of $ai$, then the right order $\{ x \in A \mid Lx \subseteq L \}$ should be used instead.
  
  Once an order $B \ni a$ is found, we may produce a maximal order $O$ containing $a$, which is Azumaya by Lemma \ref{lem:AusGold}. Since $\Nrd(a)$ is a unit in $R$, the element $a^{-1} = \frac{1}{\Nrd(a)} a$ lies in $O$ as well. Therefore, the Azumaya $R$-algebra $O$ is $C_2$-invariant, as required.
\end{proof}

\begin{thrm}\label{th:main}
  Suppose $Y$ is a regular integral curve over $k$, and $W$ is a dense open subvariety. Let $(\mathcal{A}, g)$ be an Azumaya algebra of degree $2$ with $C_2$-action over $W$. Write $A$ for the quaternion algebra given by restricting $\sh A$ to $\Spec F$, the fraction field of $Y$. Choose some $a \in A$ so that the automorphism induced by $g$ on $A$ is given by conjugation by $a$. Then $(\mathcal{A},g)$ extends to some Azumaya algebra of degree $2$ with $C_2$ action over $Y$ if and only if the following both hold for all points $y \in Y \sm W$:
  \begin{itemize}
  \item the Brauer class $[A] \in \Br(F)$ is in the kernel of the residue map $\Br(F) \to \Hoh^1_\et(\kappa(y); \Q/\Z)$
  \item the valuation of $\Nrd(a)$ with respect to the valuation on $\kappa(y)$ is even.
  \end{itemize}
\end{thrm}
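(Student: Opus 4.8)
The plan is to read the statement through the cohomological classification and then feed the local input of Proposition \ref{pr:DVR} into the global patching of Proposition \ref{pr:CTS}; almost all the content is already in place, so the proof is an assembly. First I would invoke Proposition \ref{pr:onlyOneOrder2ActionSHL}: over any scheme on which $2$ is invertible, isomorphism classes of degree-$2$ Azumaya algebras with $C_2$-action form the pointed set $\Hoh^1_\et(-; Q)$. Thus $(\sh A, g)$ on $W$ is a class $\alpha \in \Hoh^1_\et(W; Q)$, and the assertion that it extends over $Y$ as an algebra with $C_2$-action is exactly the assertion that $\alpha$ lies in the image of $\Hoh^1_\et(Y; Q) \to \Hoh^1_\et(W; Q)$. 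Since $Y$ is a regular integral curve over $k$ (with $\Char k \neq 2$, so that $2$ is invertible), it is a regular integral scheme of dimension at most $2$, and Proposition \ref{pr:CTS} applies.

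Next I would translate the conclusion of Proposition \ref{pr:CTS}: $\alpha$ extends if and only if its restriction $\alpha_F$ to $\Hoh^1_\et(\Spec F; Q)$ lies in the image of $\Hoh^1_\et(\Spec \sh O_{Y,y}; Q) \to \Hoh^1_\et(\Spec F; Q)$ for every $y \in Y^{(1)} \sm W^{(1)}$. Because $Y$ is a curve and $W$ is dense, the generic point lies in $W$, the codimension-$1$ points are the closed points, and $Y^{(1)} \sm W^{(1)}$ is precisely the finite set of closed points $Y \sm W$. For each such $y$ the ring $R = \sh O_{Y,y}$ is a DVR with fraction field $F$ and residue field $\kappa(y)$, and $\alpha_F$ is the class of the quaternion algebra with $C_2$-action $(A,g)$, where $g$ is conjugation by $a$.

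The final step is the per-point equivalence. As recorded just before Lemma \ref{lem:AusGold}, membership of $\alpha_F$ in the image of $\Hoh^1_\et(\Spec R; Q) \to \Hoh^1_\et(\Spec F; Q)$ is exactly the existence of a $g$-invariant $R$-order in $A$ that is Azumaya over $R$, and Proposition \ref{pr:DVR} identifies this with the conjunction of the two stated conditions at $y$: that $[A]$ lie in the kernel of the residue map $\Br(F) \to \Hoh^1_\et(\kappa(y); \Q/\Z)$, and that the valuation of $\Nrd(a)$ be even. Quantifying over all $y \in Y \sm W$ yields the theorem. For well-posedness I would also note that the parity condition is independent of the choice of $a$: by Skolem--Noether $a$ is determined up to an element of $F^\times$, and replacing $a$ by $fa$ multiplies $\Nrd(a)$ by $f^2$, leaving the parity of its valuation unchanged.

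These steps are essentially bookkeeping, so the one place needing genuine care, and the main obstacle I expect, is matching the hypotheses of Proposition \ref{pr:CTS} — stated for $W$ dense open \emph{affine} — to the theorem's merely dense open $W$. I would handle this using that $Y$ is regular, hence normal: an extension of $(\sh A,g)$, being a locally free (hence reflexive) sheaf of algebras inside the constant sheaf $A$, is determined by its stalks at the codimension-$1$ points. One may therefore build the extension directly, prescribing at each $y \in Y \sm W$ the $C_2$-invariant Azumaya order supplied by Proposition \ref{pr:DVR} and taking $\sh A$ itself at the points of $W$; on a curve every point is generic or codimension-$1$, and an algebra subsheaf of $A$ whose stalk at each closed point is Azumaya over the local ring is itself Azumaya. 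Alternatively one reduces to the affine case by covering $Y$ by affine opens and gluing, the gluing again controlled by normality. Either route closes the affineness gap without disturbing the cohomological argument.
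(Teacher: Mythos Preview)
Your proof is correct and matches the paper's own argument, which simply cites Proposition \ref{pr:CTS} to reduce the extension problem to the local question at each $y \in Y \sm W$ and then invokes Proposition \ref{pr:DVR} at each such DVR. Your additional remarks on the affineness hypothesis in Proposition \ref{pr:CTS} and on the independence of the parity condition from the choice of $a$ are careful refinements that the paper itself leaves implicit.
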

\begin{proof}
  Proposition \ref{pr:CTS} tells us that $(\sh A, g)$ extends to $Y$ if and only if $(A,g)$ over $\Spec F$ extends to each of $\Spec \sh O_{Y,y}$ as $y$ ranges over the points of $Y \sm W$. Since $\sh O_{Y,y}$ is a DVR, determining the extension to $\Spec \sh O_{Y,y}$ is handled by Proposition \ref{pr:DVR}. This gives the result.
\end{proof}

\section{Character Varieties of Dimension 1}
\label{sec:examples}

For the rest of the paper, we assume the ground field is $\Q$. Our examples all arise in the following way. Let $\Gamma$ be a finitely generated discrete group, and let $h \colon \Gamma \to \Gamma$ be an order-$2$ automorphism of $\Gamma$. Define $R(\Gamma)$ to be the $\SL(2;\Q)$-representation variety\footnote{We use the term ``variety'' in the sense of a reduced separated scheme of finite type over the ground field.} of $\Gamma$: a reduced finite type $\Q$-scheme whose closed points are homomorphisms $\phi: \Gamma \to \SL(2;k)$ for some field extension $k/\Q$. We will replace $R(\Gamma)$ by an irreducible $C_2$-invariant component, which we denote $R(\Gamma)_0$.

The scheme $R(\Gamma)_0$ carries two actions: an action by the group-scheme $\PGL(2)$ over $\Q$ given by change-of-basis in $\SL(2)$, and a $C_2$-action given by precomposition by $h$. These actions commute.

We may take a GIT quotient
\[ R(\Gamma)_0 \to R(\Gamma)_0/\PGL(2) =: X(\Gamma)_0  \]
to form $X(\Gamma)_0$, which is an irreducible component of the character variety, $X(\Gamma)$, of $\Gamma$. The action of $C_2$ on $R(\Gamma)_0$ descends to give an action of $C_2$ on $X(\Gamma)_0$. We will continue this introduction under the assumption that the $C_2$-action on $X(\Gamma)_0$ is trivial, since this is what happens in our examples.

The action of $\PGL(2)$ on $R(\Gamma)_0$ is particularly well behaved on the open $C_2$-invariant subscheme $V$ consisting of absolutely irreducible representations. Here the action of $\PGL(2)$ is free and proper, so that the quotient $q: V \to V/\PGL(2) := W$ is a principal $\PGL(2)$-bundle. We refer to \cite[Prop.\ 1.13]{Porti2017} for the proof. The map $q$ is $C_2$-equivariant, and $W$ is an open subvariety of $X(\Gamma)_0$ (on which we assume $C_2$ acts trivially).

\begin{notn}
  Suppose $\phi: \Gamma \to \SL(2;\bar \Q)$ is a representation. For all $\gamma  \in \Gamma$, there is a regular function
  \[ \gamma \mapsto \Tr(\phi(\gamma))  \]
  on $R(\Gamma)$. This function is conjugation-invariant and therefore descends to give a regular function $I_\gamma$ on  the character variety $X(\Gamma)$. It is a fact that the $I_\gamma$ serve to generate the coordinate ring of $X(\Gamma)$ (see, e.g., \cite[Prop.\ 2]{Sikora2013}), but we do not rely on this.
\end{notn}

The following calculation is useful in detecting (absolutely) reducible representations.
\begin{lem}\label{lem:reducibility}
  Suppose $\phi : \Gamma \to \SL(2; \bar \Q)$ is a reducible representation and
  $g, h$ are conjugate elements in $\Gamma$. Then either 
  \[ I_{gh}(\phi)=2 \quad \text{ or } \quad I_{gh}(\phi) = I_{g}(\phi)^2 -2.\]
\end{lem}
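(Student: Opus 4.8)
The plan is to reduce the claim to an explicit matrix computation by exploiting that a reducible $\SL(2)$-representation over the algebraically closed field $\bar\Q$ is simultaneously triangularizable. Since each $I_\gamma$ is conjugation-invariant, I may replace $\phi$ by any $\SL(2;\bar\Q)$-conjugate without changing either side of the asserted identity. As $\phi$ is reducible, the matrices $\phi(\Gamma)$ share a common eigenvector, so after conjugating I may assume every $\phi(\gamma)$ is upper triangular. In particular I write
\[ \phi(g) = \begin{bmatrix} \lambda & b \\ 0 & \lambda^{-1} \end{bmatrix}, \qquad \phi(h) = \begin{bmatrix} \mu & c \\ 0 & \mu^{-1} \end{bmatrix}, \]
where the bottom-right entries are forced by the determinant-one condition.

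The key input is the hypothesis that $g$ and $h$ are conjugate in $\Gamma$: then $\phi(g)$ and $\phi(h)$ are conjugate in $\SL(2;\bar\Q)$, hence have equal characteristic polynomials, which is to say $\lambda + \lambda^{-1} = \mu + \mu^{-1}$. After clearing denominators this factors as $(\mu - \lambda)(\lambda\mu - 1) = 0$, so either $\mu = \lambda$ or $\mu = \lambda^{-1}$. These two possibilities are precisely what will produce the two alternatives in the statement.

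It then remains to compute the product directly. Multiplying the two upper-triangular matrices gives $\Tr \phi(gh) = \lambda\mu + \lambda^{-1}\mu^{-1}$, the off-diagonal entries $b, c$ contributing nothing to the trace. In the case $\mu = \lambda^{-1}$ this equals $1 + 1 = 2$, yielding $I_{gh}(\phi) = 2$. In the case $\mu = \lambda$ it equals $\lambda^2 + \lambda^{-2} = (\lambda + \lambda^{-1})^2 - 2 = I_g(\phi)^2 - 2$, yielding the second alternative.

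I expect no genuine obstacle in this argument: once the representation is triangularized, each step is a one-line computation. The only points requiring minor care are verifying that the two diagonal cases match up with the two displayed formulas in the correct way, and observing that the off-diagonal terms are irrelevant because the trace of a product of upper-triangular matrices depends only on the diagonal entries.
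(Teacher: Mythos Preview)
Your argument is correct and is essentially the same as the paper's: both triangularize the reducible representation, use conjugacy of $g$ and $h$ to force the diagonal of $\phi(h)$ to be $(\lambda,\lambda^{-1})$ or $(\lambda^{-1},\lambda)$, and then read off $\Tr\phi(gh)$ from the diagonals. You have simply written out the ``simple calculation'' that the paper leaves implicit.
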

\begin{proof}
  Since $\phi$ is reducible, there exists a basis of $\bar \Q^2$ in which $\phi(g)$ and $\phi(h)$ are both in upper-triangular
  form. Since $g$, $h$ are similar, we have
  \[ \phi(g) =
    \begin{bmatrix}
      x & \ast \\ 0 & x^{-1} 
    \end{bmatrix}, \quad \phi(h) =
    \begin{bmatrix}
      x^{\pm} & \ast \\ 0  & x^{\mp} 
    \end{bmatrix}.
  \]
  The result is now a simple calculation.
\end{proof}

We will routinely write things such as ``the locus $I_{gh}= I_g^2-2$'' to mean the closed subvariety where this condition holds.

Since $q:V \to W$ is a principal $\PGL(2)$-bundle, and $\PGL(2)$ is the automorphism group of the  algebra-scheme $\Mat_{2 \times 2}$ over $\Q$, we can form an Azumaya algebra $\sh A$ (the \emph{tautological algebra}) on $W$ as the sheaf of sections of the fibre bundle
\[
  \begin{tikzcd}
    \Mat_{2\times 2} \rar & (\Mat_{2 \times 2} \times  V)/\PGL(2) \dar{q} \\ & W,
  \end{tikzcd}
\]
the quotient being taken by the diagonal $\PGL(2)$ action $A \cdot (M, \phi) = (A^{-1}MA, A^{-1}\phi A)$.
The $C_2$-action on $V$ induces a $C_2$-action on $\sh A$ over $W$, giving us an Azumaya algebra with $C_2$-action over $W$.

\begin{notn}\label{not:taut}
 If $\gamma \in \Gamma$ is an element, then there is a morphism $\tilde m_\gamma: V \to \Mat_{2 \times 2} \times V$ given by sending a representation $\phi$ to $(\phi(\gamma), \phi)$. This is $\PGL(2)$-equivariant: $A^{-1}\phi A \mapsto (A^{-1} \phi(\gamma) A, A^{-1} \phi A)$. Therefore it descends to give a morphism on the quotient
  \[ m_\gamma : W \to  (\Mat_{2 \times 2} \times  V)/\PGL(2), \]
  i.e., $m_\gamma  \in \sh A(W)$.   
\end{notn}

\subsection{Interpreting the tautological algebra}
\label{sec:interpr-taut-algebra}

We attempt to explain the sense in which $\sh A$ is tautological. It may be best to begin by considering the pullback of $\sh A$ along $q: V \to W$. This sheaf $q^*(\sh A)$ on $V$ is the sheaf of sections of $\Mat_{2 \times 2} \times V \to V$, i.e., it is a constant sheaf of $2 \times 2$ matrix algebras. Since a $\bar \Q$-valued point $\tilde w: \Spec \bar \Q \to V$ is just a representation $\Gamma \to \SL_{2 \times 2}(\bar \Q)$, and $\SL_{2 \times 2}(\bar \Q)$ is a subset of $\Mat_{2 \times 2}(\bar \Q)$, we may view $q^*(\sh A)$ as the sheaf of matrix algebras in which the representations of $\Gamma$ take values.

A point $w : \Spec \bar \Q  \to W$ is a conjugacy class $[\phi]$ of representations $\Gamma \to \SL(2;\bar \Q)$. The pullback $w^*(\sh A)$ is a sheaf of algebras over $\Spec \bar \Q$, i.e., an algebra. For any $w$, there exists a lift $\tilde w$ making the following diagram commute:
\begin{equation*}
  \begin{tikzcd}
    & V \dar{q} \\ \Spec \bar \Q \urar{\tilde w} \rar{w} & W
  \end{tikzcd}
\end{equation*}
The choice of $\tilde w$ is exactly the choice of a representative $\phi \in [\phi]$.

We may identify $\tilde w^*(q^*(\sh A)) = w^*(\sh A)$. We deduce that $w^*(\sh A)$ is abstractly isomorphic to $\Mat_{2 \times 2}(\bar \Q)$ and obtain an isomorphism $w^*(\sh A) \iso \Mat_{2 \times 2}(\bar \Q)$. We may view $w^*(\sh A)$ as the target matrix algebra of a representation in the class of $[\phi]$, but without a particular choice of representation, or equivalently, without a particular choice of basis for the algebra.

That is, the sheaf $\sh A$ is the sheaf of algebras over $W$ in which the representations, whose equivalence classes are the points in $W$, take values. There being no chosen bases for these algebras, we can access elements of them only by using the sections $m_\gamma$ for $\gamma \in \Gamma$, and polynomials in these sections. Typically, one cannot find global formulas in the $m_\gamma$ that everywhere produce a basis for the stalks $w^*(\sh A)$, so that $\sh A$ is not globally isomorphic to the split algebra $\Mat_{2 \times 2}(\sh O_W)$; rather it is a sheaf of algebras that is globally twisted, i.e., it is an nonsplit Azumaya algebra.

 Unwinding definitions, we see that $q^*(m_\gamma) = \tilde m_\gamma$. Furthermore $\tilde w^*(\tilde m_\gamma) = \phi(\gamma) \in \Mat_{2\times 2}(\bar \Q)$. Applying the relation $w^* = \tilde w^* \circ q^*$ again, we see that \[ w^*(m_\gamma) \in w^*(\sh A) \]
is the coordinate-independent version of
\[ \phi(\gamma) \in \Mat_{2\times 2}(\bar \Q), \]
in that the former yields the latter once a specific $\phi$ in the conjugacy class of representations is chosen.

Since the representations $\phi$ under consideration are all irreducible, their images in $\Mat_{2\times 2}(\bar \Q)$ generate it as an algebra. This being a coordinate-independent fact, the algebras $q^*(\sh A)$ are all generated by the elements $q^*(m_\gamma)$.

\subsection{The Algebra over the Fraction Field}
\label{sec:ff}

Our main theorem, Theorem \ref{th:main}, requires us to know something about the restriction of an Azumaya algebra $\sh A$ to the fraction field of the variety over which it is defined. In this section, we study this algebra.

We continue to assume that $W$ is a dense open subvariety of an irreducible component of a character variety $X(\Gamma)$ and that the tautological algebra $\sh A$ is defined over $W$. Let $F$ denote the fraction field of $W$, and write $A$ for the restriction of $\sh A$ to $\Spec F$. The algebra $A$ is an Azumaya algebra of degree $2$ over the field $F$, i.e., a quaternion algebra. In principle $A$ may be split, in the sense that $A \iso \Mat_{2\times 2}(F)$, but $A$ is  nonsplit in all the examples considered in this paper.

A quaternion algebra $A$ over a field $F$ of characteristic different from $2$ admits a presentation as a \emph{symbol algebra}, i.e., a fixed isomorphism with an algebra determined by a Hilbert symbol:
\[ A \iso \left( \frac{a,b}{F} \right ),\quad a,b \in F^\times. \]
This means that there are elements $i, j \in A$ satisfying $i^2=a$, $j^2=b$ and $ij=-ji$. We refer to \cite{Voight2021} for general background on such algebras. A number of features of $A$, notably the reduced norm and reduced trace, are easily computed when $A$ is presented as a symbol algebra. We shall give formulas for presenting the algebras that arise in this paper as symbol algebras.

Since $A$ is a degree-$2$ Azumaya algebra over a field, it carries a standard
involution: $\bar x = \Trd(x)1_A - x$. In these terms, $\Nrd(x)1_A = x \bar x$.
We may define an $F$-bilinear form on $A$:
\[ \langle x, y \rangle = \frac{1}{2}\Trd(\bar x y ) = \frac{1}{2}(\bar x y + \bar y x),\] which is immediately seen to be symmetric.

We will refer below to \emph{traceless} elements of $A$. These are elements for which $\bar x = -x$, so that $\Trd(x) = 0$. For such an element, $x^2 + \Nrd(x)1_A = 0$, so that $x^2 \in F$. Two traceless elements are orthogonal if and only if they anticommute, since we can write
\[ xy + yx = -\bar x y - \bar y x = 2 \langle x, y \rangle \]
under the assumption that the traces are $0$.

If $\gamma \in \Gamma$ is a group element, then $m_\gamma \in \sh A(W)$ was defined in Notation \ref{not:taut}. The global section $m_\gamma$ yields an element of $A$ by restriction, and we also use $m_\gamma$ for this element in an abuse of notation. The coordinate ring of $W$, which contains the element $I_\gamma$, is a subring of $F$, so that $I_\gamma$ is also an element of $F$. By virtue of the definitions, $I_\gamma = \Trd(m_\gamma)$.

\begin{prop} \label{pr:symbolAlgebra}
 Suppose that $m_g$ and $m_h$ generate $A$ as an algebra, and that $I_g^2
  \neq 4$. Define
  \[ i = 2m_g  - I_g, \quad j' = 2m_h - I_h, \quad j= -i\left(j' - \frac{\langle i,j' \rangle}{\langle i , i \rangle }
      i \right).\]
  Then $i,j$ generate $A$ as an $F$-algebra, $ij=-ji$ and
  \[ i^2 = I_g^2 - 4, \quad j^2 = -(I_g^2 -4)(I_h^2-4) + (2I_{gh} - I_gI_h)^2. \]
  In particular, $A$ is a symbol algebra given by
  \[ A \iso \left( \frac{I_g^2 - 4, \, -(I_g^2 -4)(I_h^2-4) + (2I_{gh} - I_gI_h)^2}{F} \right).\]  
\end{prop}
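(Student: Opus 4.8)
The plan is to recognize the definition of $j$ as a Gram--Schmidt orthogonalization of $j'$ against $i$ with respect to the symmetric bilinear form $\langle\,,\,\rangle$, and then to exploit the fact, recorded just above the statement, that two traceless elements are orthogonal precisely when they anticommute. The essential arithmetic input is that each $m_\gamma$ is valued in $\SL(2)$, so $\Nrd(m_g) = \Nrd(m_h) = 1$ in $F$; equivalently $m_g$ satisfies $m_g^2 = I_g m_g - 1_A$. First I would check that $i = 2m_g - I_g$ and $j' = 2m_h - I_h$ are traceless, since $\Trd(m_g) = I_g$ and $\Trd(1_A) = 2$. For a traceless $x$ one has $x^2 = -\Nrd(x)1_A \in F$, and substituting $m_g^2 = I_g m_g - 1_A$ gives $i^2 = I_g^2 - 4$ (and symmetrically $(j')^2 = I_h^2 - 4$), which already establishes the stated value of $i^2$. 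The hypothesis $I_g^2 \neq 4$ makes $i^2 \in F^\times$, so that $i$ is invertible and $\langle i, i\rangle = \Nrd(i) = -(I_g^2-4)$ is nonzero, which is exactly what is needed to divide by $\langle i, i\rangle$ in the definition of $j$.

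Next I would compute the three inner products. Using $\bar\imath = -i$ one has $\langle i, j'\rangle = -\tfrac12\Trd(ij')$; expanding $ij' = (2m_g - I_g)(2m_h - I_h)$ and taking reduced traces, with $\Trd(m_g m_h) = I_{gh}$, yields $\langle i, j'\rangle = -(2I_{gh} - I_gI_h)$, while $\langle j', j'\rangle = -(I_h^2-4)$. The element $j'' := j' - \frac{\langle i, j'\rangle}{\langle i, i\rangle} i$ is then traceless and orthogonal to $i$ by construction, hence anticommutes with $i$, and $j = -i\,j''$. From $i j'' = -j'' i$ a short manipulation gives $ij = -i^2 j''$ and $ji = i^2 j''$, proving $ij = -ji$, and likewise $j^2 = -i^2(j'')^2$. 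Since $j''$ is traceless, $(j'')^2 = -\langle j'', j''\rangle$, and the standard Gram--Schmidt identity $\langle j'', j''\rangle = \langle j', j'\rangle - \langle i,j'\rangle^2/\langle i,i\rangle$ lets me assemble
\[
  j^2 = i^2\langle j'', j''\rangle = (I_g^2-4)\left[-(I_h^2-4) + \frac{(2I_{gh}-I_gI_h)^2}{I_g^2-4}\right] = -(I_g^2-4)(I_h^2-4) + (2I_{gh}-I_gI_h)^2,
\]
which is the claimed value of $j^2$.

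Finally, for the generation statement I would invert the construction: since $i^2 = I_g^2 - 4 \in F^\times$, the subalgebra generated by $i$ and $j$ contains $i^{-1}$, hence $j'' = -i^{-1}j$, hence $j' = j'' + \frac{\langle i,j'\rangle}{\langle i,i\rangle}i$, and therefore $m_g = \tfrac12(i+I_g)$ and $m_h = \tfrac12(j'+I_h)$; as $m_g, m_h$ generate $A$ by hypothesis, so do $i, j$. The symbol-algebra presentation then follows formally: the relations $i^2 = a$, $j^2 = b$, $ij = -ji$ produce a surjection $\left(\frac{a,b}{F}\right) \twoheadrightarrow A$ of $4$-dimensional $F$-algebras, necessarily an isomorphism. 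I expect the only genuine obstacle to be the sign- and scalar-bookkeeping in the reduced-trace computations of the three inner products and in collapsing the Gram--Schmidt expression for $j^2$; everything else is structural, resting on the orthogonal-$\Leftrightarrow$-anticommuting dictionary and on $\Nrd(m_g)=\Nrd(m_h)=1$.
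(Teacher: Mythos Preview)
Your proposal is correct and follows essentially the same Gram--Schmidt approach as the paper's proof: subtract traces to get $i,j'$, orthogonalize $j'$ against $i$ to get $j''$, set $j=-ij''$, and compute $i^2,j^2$. Your computation of $j^2$ via the identity $\langle j'',j''\rangle=\langle j',j'\rangle-\langle i,j'\rangle^2/\langle i,i\rangle$ is in fact slightly cleaner than the paper's more hands-on expansion, and you supply an explicit inversion argument for the generation claim that the paper only asserts.
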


\begin{proof}
Starting with the generators $m_g, m_h$ of $A$, we may take the traceless elements
\[ i = 2m_g - I_g,\qquad j'= 2 m_h -  I_h, \]
and then orthogonalize with respect to the bilinear form:
\[ j'' = j' - \frac{ \langle i, j' \rangle}{\langle i, i \rangle} i. \]

\noindent We replace $j''$ by
\[ j = -i j''. \]
Together, $i, j$ are generators of the $F$-algebra $A$. Since $i$ and $j''$ are orthogonal and $j''$ is traceless,  $i$ and $-j'' = \bar \jmath''$ are also orthogonal. We calculate
\[ \Trd(j) = -ij'' + (-\bar \jmath'' \bar \imath) = -ij'' -j''i =  -2 \langle i, j'' \rangle = 0. \]
Furthermore,
\[ ij = -i^2 j'' = ij''i = -ji. \]
so that $i,j$ anticommute, i.e., are themselves orthogonal.

Therefore, $A \iso \left( \frac{ i^2, j^2 }{F} \right)$. It remains to calculate $i^2$ and $j^2$. One of these is easily written down directly:
 \[ i^2 = 4 m_g^2 - 4I_g m_g + I_g^2 = I_g^2 - 4. \]

Calculating $j^2$ is considerably more time-consuming. To simplify the notation, let us write
\[ Q = \frac{ \langle i, j' \rangle}{\langle i, i \rangle}. \] We may calculate explicitly

\begin{equation}
  \begin{aligned}  Q & = \frac{\Trd(i \bar \jmath')}{\Trd(i\bar \imath)} = \frac{\Trd(ij')}{\Trd(i^2)}\\
    &= \frac{\Trd(4m_gm_h -2m_gI_h -2m_hI_g +I_gI_h)}{\Trd(4m_g^2 -4m_gI_g +I^2_g) } \\
    &= \frac{2I_{gh} - I_g I_h}{I_g^2-4}.
  \end{aligned}
  \label{eq:17}
\end{equation}

\noindent Using this, we may write
\begin{align*} j^2 &= -(I_g^2-4)(b'')^2= (I_g^2-4)[(I_h^2 -4) - Q(ij' + j'i) + Q^2 (I_g^2-4)] \\
                   &= -(I_g^2-4)\left[(I_h^2 -4) -
                     \frac{2I_{gh}-I_gI_h}{I_g^2-4}(\Trd(4m_{gh}-2I_gm_h-2I_hm_g+I_gI_h))+ \right. \\
                     & \qquad \left.+\frac{(2I_{gh}-I_gI_h)^2}{(I_g^2-4)^{2}}(I_g^2-4)\right] \\
                   &= -(I_g^2-4)(I_h^2-4) +2(2I_{gh}-I_gI_h)^2-(2I_{gh}-I_gI_h)^2 \\
                   &=-(I_g^2-4)(I_h^2-4)  + (2I_{gh}-I_gI_h)^2.
\end{align*}
This completes the proof.
\end{proof}

We may also wish to write $m_g$ and $m_h$ in terms of $i,j$:
\begin{equation}
  \label{eq:18}
  m_g = \frac{1}{2}I_g + \frac{1}{2} i
\end{equation}
is immediate. Rearranging $j = -i j''$, we have 
\[ j''= \frac{-i}{I_g^2-4}j  \]
from which we deduce
\[ j'=\frac{-i}{I_g^2-4}j+Qi,  \]
so
\begin{equation}
  m_h= \frac{1}{2}I_h+ \frac{1}{2}Qi - \frac{1}{2I_g^2-8}ij\label{eq:19}
\end{equation}
where $Q$ is defined as in \eqref{eq:17} above.

\begin{cor} \label{cor:symbolConjugate}
  If $g$ and $h$ are conjugate, then the formulas of Proposition \ref{pr:symbolAlgebra} simplify to give
  \[i^2= I_g^2 -4, \quad j^2 = 4(I_{gh}-2)(I_{gh}-I_g^2 +2)  = 4I_{[g,h]}+16. \]
\end{cor}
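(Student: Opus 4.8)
The plan is to specialize the $j^2$ formula of Proposition \ref{pr:symbolAlgebra} to the conjugate case and then reinterpret the result via the commutator trace. The first point is that conjugate elements have equal character: the trace is a class function on $\SL(2)$, and $I_\gamma = \Trd(m_\gamma)$ depends only on the conjugacy class of $\gamma$, so $g\sim h$ forces $I_h = I_g$. The formula $i^2 = I_g^2-4$ is then unchanged, and I would substitute $I_h = I_g$ into
\[ j^2 = -(I_g^2-4)(I_h^2-4) + (2I_{gh}-I_gI_h)^2, \]
reducing everything to an algebraic identity in the two variables $I_g$ and $I_{gh}$.

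The first claimed equality is then pure bookkeeping. After the substitution the right-hand side becomes $-(I_g^2-4)^2 + (2I_{gh}-I_g^2)^2$, which is a difference of two squares $X^2 - Y^2$ with $X = 2I_{gh}-I_g^2$ and $Y = I_g^2-4$. Factoring as $(X-Y)(X+Y)$ gives $X - Y = 2(I_{gh}-I_g^2+2)$ and $X + Y = 2(I_{gh}-2)$, whose product is exactly $4(I_{gh}-2)(I_{gh}-I_g^2+2)$, as asserted.

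For the second equality I would invoke the classical $\SL(2)$ trace identities. The fundamental relation $\Trd(xy) + \Trd(xy^{-1}) = \Trd(x)\Trd(y)$ (equivalently Cayley--Hamilton together with $\Trd(x^{-1}) = \Trd(x)$ in $\SL(2)$), applied to the commutator, yields
\[ I_{[g,h]} = I_g^2 + I_h^2 + I_{gh}^2 - I_gI_hI_{gh} - 2, \]
which under $I_h = I_g$ becomes $I_{[g,h]} = 2I_g^2 + I_{gh}^2 - I_g^2I_{gh} - 2$. It then remains to compare the quartic $4(I_{gh}-2)(I_{gh}-I_g^2+2)$ with an affine-linear rescaling of $I_{[g,h]}$, both now polynomials in $(I_g, I_{gh})$.

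The main obstacle—indeed the only place any care is needed—is getting the additive constant in this last comparison exactly right, since both expressions carry several $-2$'s and $-4$'s that must be reconciled. The cleanest way to fix the constant unambiguously is to evaluate at the degenerate specialization $h = g$ (a legitimate evaluation of these trace polynomials, with $I_{gh} = I_{g^2} = I_g^2 - 2$ and $[g,h] = e$, so $I_{[g,h]} = 2$): here the Gram--Schmidt step collapses $j'' = j' - i$ to $0$, forcing $j^2 = 0$, and this test value pins down the constant in the relation between $j^2$ and $I_{[g,h]}$. I would carry out this check before committing to the final form, as it is precisely where an off-by-a-constant slip is easiest to make; everything surrounding it is routine substitution and expansion.
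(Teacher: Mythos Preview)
Your approach mirrors the paper's: use $I_h = I_g$, factor the resulting difference of squares to obtain $j^2 = 4(I_{gh}-2)(I_{gh}-I_g^2+2)$, and then invoke the Fricke identity $I_{[g,h]} = I_g^2 + I_h^2 + I_{gh}^2 - I_gI_hI_{gh} - 2$ to convert to the commutator trace. The sanity check you propose at $h=g$ is not in the paper, and it is worth actually carrying out, because it exposes an arithmetic slip in the stated constant (and in the paper's own one-line computation). At $h=g$ one has $j^2=0$ while $I_{[g,h]}=2$, so the relation $j^2 = 4I_{[g,h]} + c$ forces $c=-8$, not $+16$. Direct expansion confirms this:
\[ 4(I_{gh}-2)(I_{gh}-I_g^2+2) = 8I_g^2 + 4I_{gh}^2 - 4I_g^2 I_{gh} - 16 = 4I_{[g,h]} - 8.\]
The corrected identity $j^2 = 4(I_{[g,h]}-2)$ is exactly what the subsequent Remark requires in order to match the Hilbert symbol $\left(\frac{I_g^2-4,\,I_{[g,h]}-2}{F}\right)$ of Chinburg et al., since the factor $4$ is a square.
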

\begin{proof}
  If $g$ and $h$ are conjugate, then $I_g=I_h$. From there, the simplification $j^2 = 4(I_{gh}-2)(I_{gh}-I_g^2 + 2)$ is elementary. Then using a trace relation, \cite[\S3.4, \,
  (3.15)]{MR}, we see that
  \[ 4 I_{[g,h]} = 8I^2_g + 4I^2_{gh} -4I_g^2 I_{gh} - 8 = j^2-16,\]
  which is what was required.
\end{proof}

\begin{remk}
  Since the isomorphism class of the symbol algebra $\left( \frac{x,y}{F} \right)$ is invariant under multiplication of $x,y$ by squares, our calculation agrees with \cite[Cor.~2.9]{Chinburg2020}, which says the quaternion algebra $A$ has Hilbert symbol
  \[ A \iso \left( \frac{I_g^2 -4 , I_{[g,h]} - 2}{F}\right). \] In this presentation, one takes the generators to  be what we have called $i$ and $j/2$.
\end{remk}

The condition in Proposition \ref{pr:symbolAlgebra} that $m_g$ and $m_h$ should generate $A$ is easily satisfied in practice.

\begin{prop}\label{pr:twogen}
  Suppose $W$ contains the character of a faithful representation. If $g, h \in \Gamma$ are two noncommuting elements, then $m_g, m_h$ generate the algebra $A$ over $F$.
\end{prop}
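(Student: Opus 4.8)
The plan is to reduce the proposition to the single claim that $m_g$ and $m_h$ do not commute in $A$, and then to verify this noncommutativity by evaluating a commutator section at the character of the faithful representation. For the reduction, let $B \subseteq A$ be the $F$-subalgebra generated by $m_g$ and $m_h$. Because $A$ is a quaternion algebra, every $F$-subalgebra has dimension $1$, $2$, or $4$: a $3$-dimensional subalgebra is impossible (as already used in the proof of Proposition~\ref{pr:DVR}), while every subalgebra of dimension $1$ or $2$ is commutative. Hence if $m_g$ and $m_h$ fail to commute, then $B$ is a noncommutative subalgebra, which forces $B = A$. So it suffices to prove $m_g m_h \neq m_h m_g$ in $A$.

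Next I would promote this to a statement about a global section. The commutator $c := m_g m_h - m_h m_g$ lies in $\sh A(W)$, and its restriction to $\Spec F$ is exactly the element $m_g m_h - m_h m_g \in A$ whose nonvanishing we want. Since $\sh A$ is Azumaya, it is locally free, and hence torsion-free, over the integral scheme $W$; therefore the restriction map $\sh A(W) \to A$ is injective, and it is enough to exhibit a single point of $W$ at which $c$ does not vanish.

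Finally I would evaluate at the character $w_0$ of the faithful representation $\phi_0$, which lies in $W$ by hypothesis. Choosing a lift to $V$ (equivalently, a representative $\phi_0$ of the conjugacy class) identifies the fibre of $\sh A$ at $w_0$ with a matrix algebra in such a way that $m_g$ and $m_h$ pull back to $\phi_0(g)$ and $\phi_0(h)$, as explained in Section~\ref{sec:interpr-taut-algebra}. Since pullback along this point is a homomorphism of algebras, $c$ evaluates to $\phi_0(g)\phi_0(h) - \phi_0(h)\phi_0(g) = \phi_0(gh) - \phi_0(hg)$. As $g$ and $h$ do not commute, $gh \neq hg$ in $\Gamma$, and faithfulness of $\phi_0$ gives $\phi_0(gh) \neq \phi_0(hg)$, so the fibre value of $c$ is nonzero; by the previous paragraph $c \neq 0$ in $A$, as required.

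The step I expect to require the most care is the passage from nonvanishing at the single point $w_0$ to nonvanishing at the generic point, which rests on the injectivity of $\sh A(W) \to A$, i.e.\ on $\sh A$ being torsion-free over the integral scheme $W$. Everything else is either the elementary classification of subalgebras of a quaternion algebra or a direct unwinding of the tautological description of the sections $m_\gamma$.
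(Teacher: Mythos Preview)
Your argument is correct and follows the same route as the paper: show that $m_g m_h \neq m_h m_g$ by evaluating at the character of a faithful representation, and then invoke the fact that any two noncommuting elements of a quaternion algebra generate it. The paper states both steps tersely, whereas you spell out the subalgebra-dimension argument for the second step and, for the first, make explicit the torsion-freeness of $\sh A$ over the integral scheme $W$ that justifies passing from nonvanishing at a closed point to nonvanishing at the generic point; the paper leaves that passage implicit in the word ``Therefore''.
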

\begin{proof}
   Let $w \in W$ be the character of a faithful representation. If $g,h \in \Gamma$ are two noncommuting elements, then $m_gm_h \neq m_hm_g$ as global sections of $\sh A$, since their restrictions to $\sh A_w$ differ. Therefore $m_g m_h \neq m_hm_g$ in the quaternion algebra $A$ over $F$. A quaternion algebra is generated by any two noncommuting elements.
\end{proof}

\section{Examples from knot theory}
\label{sec:knots}

In our examples, $\Gamma$ is the knot group of the Figure-8 knot, which is a hyperbolic $2$-bridge knot $K \subset S^3$. In particular, the knot group $\Gamma = \pi_1(S^3 \sm K)$ admits a presentation with two conjugate generators and one relation.

The actions of $C_2$ on $\Gamma$ are induced from actions of $C_2$ on $S^3$ preserving $K$ setwise and preserving the orientation of $S^3$. For the group $\Gamma$, the character variety contains $1$ or $2$ distinguished $1$-dimensional components: the canonical components. In the case of the Figure-$8$ knot, there is $1$ canonical component, which contains  all characters of irreducible representations.

We take $X(\Gamma)_0$ to be such a canonical component, which is so named because it contains the class of a lift $\phi : \Gamma \to \SL(2;\C)$ of the canonical representation of $\Gamma$ as deck transformations of the universal cover $\widetilde{S^3 \sm K} \to S^3 \sm K$. The deck-transformation representation $\Gamma \to \Isom(\widetilde{S^3 \sm K})$ amounts to an irreducible faithful representation $\Gamma \to \PSL(2; \C) = \PGL(2;\C)$, once an isomorphism of $\widetilde{S^3 \sm K}$ with oriented hyperbolic $3$-space is chosen. The representation $\phi$ is actually defined over $\bar \Q$ and is an irreducible faithful representation: a clear account of this may be found in \cite[\S\ 4]{Dunfield2012}. We may therefore suppose that $W \subset X(\Gamma)_0$ contains the class $[\phi]$ of an absolutely irreducible faithful representation of $\Gamma$.

The variety $W$ is smooth over $\C$, but $X(\Gamma)_0$ may be singular. We replace $X(\Gamma)_0$ by $\tilde X$, its normalization, as in \cite{Chinburg2020}. The action of $C_2$ induces an action on the representation variety $R(\Gamma)$ and therefore on the irreducible component $R(\Gamma)_0$, and so on $X(\Gamma)_0, \tilde X$ and $W$. In the examples we consider, the action on $W$ is trivial, so that $C_2$ acts on the Azumaya algebra $\sh A$ over $W$.

Some of the calculations in this section were done using the computer algebra system \textit{Magma} \cite{Magma}. The code is available at \url{https://github.com/tbjw/Conjugating_Elements/releases/tag/arXiv20251026}. The results can be verified by hand.

\begin{remk}
  The question of whether one can tell \textit{a priori\/} that the action of $C_2$ on $\tilde X$ is trivial, knowing only the action of $C_2$ on $K \subseteq S^3$ is not addressed here. We are content to observe that it is trivial in our example by direct calculation.
\end{remk}

\subsection{The Figure-$8$ Knot}
\label{sec:figure-8-knot}

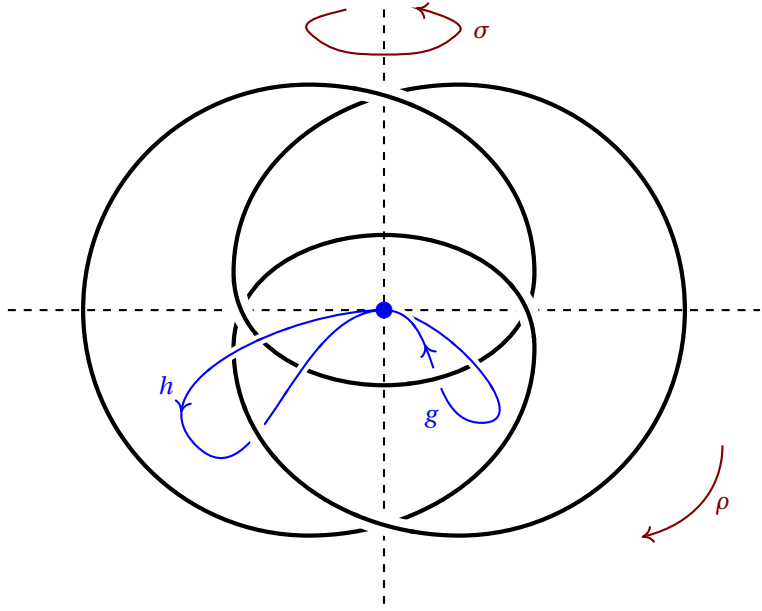
\begin{figure}
  \centering
  \begin{tikzpicture}[pics/arrow/.style={code={%
  \draw[line width=0pt,{Computer Modern Rightarrow[line
  width=0.8pt,width=1.5ex,length=1ex]}-] (-0.5ex,0) -- (0.5ex,0);
}}]

\draw [thick, dashed] (4,4)--(4,-4);

\draw [thick, dashed] (-1,0)--(9,0);

  \begin{knot}[
    consider self intersections=no splits,
    ignore endpoint intersections=true,
  clip width=5,
  clip radius=10pt,
  end tolerance=1pt 
  ]

\strand[ultra thick] (0,0)
to [out=up, in=left]  (3,3)
to [out=right, in=up]  (6,0.5)
to [out=down, in=right] (4,-1)
to [out=left, in=down] (2,0.5)
to [out=up, in=left] (5,3)
to [out=right, in=up] (8,0)
to [out=down, in=right] (5,-3)
to [out=left, in=down] (2,-0.5)
to [out=up, in=left] (4,1)
to [out=right, in=up] (6,-0.5)
to [out=down, in=right] (3,-3)
to [out=left, in=down] (0,0);

\strand[thick, blue] (4,0)
to [out=right, in=left] pic[pos=0.4,sloped]{arrow}(5.3,-1.5)
to [out=right, in=right] (4,0);

\strand[thick, blue] (4,0)
to [out=left, in=135]  pic[pos=0.8,sloped]{arrow}(1.5,-1.8)
to [out=315, in=left] (4,0);
\flipcrossings{2,4,6,7,9}

\strand[thick, red!50!black](3.5,4)
to [out=195, in=135](3,3.7)
to [out=315, in=left](4,3.4)
to [out=right, in=225](5,3.7)
to [out=45, in=165] pic[pos=1, sloped]{arrow}(4.5,4);

\strand[thick, red!50!black](7.5,-3)
to [out=15, in=270] pic[pos=0, sloped]{arrow}(8.5,-1.8);

\end{knot}
\node[blue] at (4.63,-1.45) {$g$};
\node[blue] at (1.1,-1.0) {$h$};
\node[red!50!black] at (5.3,3.7) {$\sigma$};
\node[red!50!black] at (8.5,-2.6) {$\rho$};
\filldraw[blue] (4,0) circle (3pt);
\end{tikzpicture}
\caption{The Figure-8 Knot, generators for the knot group, and two indicated symmetries.}
  \label{fig:Figure8}
\end{figure}

Let $K$ denote the Figure-8 Knot in $S^3$, depicted in Figure \ref{fig:Figure8}. The knot group admits a presentation
\[ \Gamma =\langle g, h\mid h^{-1} g^{-1} hgh^{-1}ghg^{-1}h^{-1}g \rangle. \]
Note that $g$ and $h$ are conjugate in this group, since $h = (g^{-1}hgh^{ -1}) g (g^{-1}hgh^{-1})^{-1}$.

The $\SL(2, \C)$-character variety is determined in \cite{Whittemore1973}. In fact, computer algebra calculations show that the character scheme over $\Q$ is given by the same equations. It consists of two components: one corresponding to abelian representations, which are not irreducible, and one canonical component $X_0$ which is a nonsingular affine curve defined by a single equation in $2$
variables $I_g,I_{gh}$ satisfying:
\begin{equation} \label{eq:whittemore}
  -I_g^2I_{gh}+2I_g^2+I_{gh}^2 - I_{gh} -1=0.
\end{equation}

The condition $I_{gh}=2$ is inconsistent with \eqref{eq:whittemore}. A representation $\phi$ is therefore absolutely irreducible unless $I_{gh}(\phi)^2 = I_g(\phi)^2 -2$, by Lemma \ref{lem:reducibility}. Combining this condition with equation \eqref{eq:whittemore}, we see that the absolutely irreducible locus contains the complement in $X_0$ of the closed variety given by $I_g^2 = 5$. This subvariety consists of two $\Q[\sqrt{5}]$-valued points, and we see that the corresponding representations are reducible by constructing them:
\[\phi(g) = \frac{1}{2} \begin{bmatrix}  \pm \sqrt{5} - 1 & 0 \\  0 & \pm \sqrt{5} +1 \end{bmatrix}, \quad \phi(h) = \frac{1}{2} \begin{bmatrix} \pm \sqrt{5} - 1 & 0 \\  -2  & \pm \sqrt{5} +1 \end{bmatrix}.
\]
Therefore, the absolutely irreducible locus $W\subseteq X_0$ in is precisely the open complement of the two points defined by $I_g^2 = 5$.

The tautological algebra $\sh A$ over $W$ is generated as an algebra by the global sections
$m_g, m_h$, by virtue of Proposition \ref{pr:twogen}. Letting $F$ denote the fraction field of $W$ and $A = \sh A_F$ the restriction of $\sh A$ along $\Spec F \to W$, Corollary \ref{cor:symbolConjugate} tells us that
\[ A = \left( \frac{ I_g^2 - 4, 4(I_{gh}-2)(I_{gh}-I_g^2+2)}{F} \right). \]
This algebra is generated by $i,j$, of course, and is also generated by $m_g$ and $m_h$, which may be expressed in terms of $i,j$ by means of equations \eqref{eq:18} and \eqref{eq:19}. Note that $I_g=I_h$ in this variety because $g,h$ are conjugate.

The Figure-$8$ knot admits several different $C_2$-actions. We will consider three of them: a strong inversion $\rho$ and two $2$-periodic symmetries $\sigma$, $\sigma \rho$. In Figure \ref{fig:Figure8}, the strong inversion $\rho$ is given by rotation by $\pi$ about an axis emanating directly out of the diagram at the marked central point. The $2$-periodicities $\sigma$ and $\rho \sigma$ are given by rotation by $\pi$ about the two dashed axes. Note that these two axes each intersect the knot at $2$ points.

\subsubsection*{The strong inversion}
\label{sec:strong-inversion}

The strong inversion acts on $\Gamma$ in the following way:
  \begin{equation}
    \label{eq:2}
    \rho : g \mapsto g^{-1}hg, \qquad  h \mapsto hg^{-1}hgh^{-1}.
  \end{equation}
This is determined by direct calculation from the diagrams in Figures \ref{fig:Figure8} and \ref{fig:Figure8-p}.

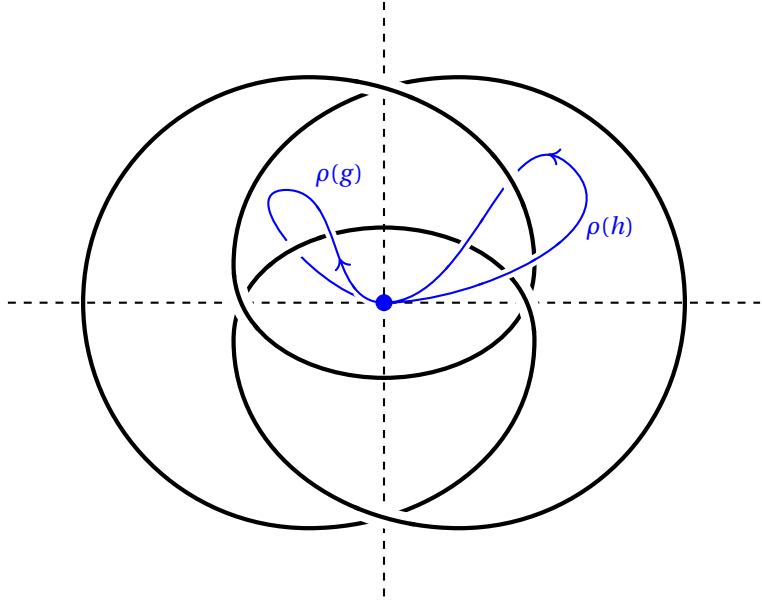
\begin{figure}[h]
  \centering
  \begin{tikzpicture}[pics/arrow/.style={code={%
  \draw[line width=0pt,{Computer Modern Rightarrow[line
  width=0.8pt,width=1.5ex,length=1ex]}-] (-0.5ex,0) -- (0.5ex,0);
}}]

\draw [thick, dashed] (4,4)--(4,-4);

\draw [thick, dashed] (-1,0)--(9,0);

\begin{knot}[
  consider self intersections=no splits,
  ignore endpoint intersections=true,
  clip width=5,
  clip radius=10pt,
  end tolerance=1pt 
  ]
  
\strand[ultra thick] (0,0)
to [out=up, in=left]  (3,3)
to [out=right, in=up]  (6,0.5)
to [out=down, in=right] (4,-1)
to [out=left, in=down] (2,0.5)
to [out=up, in=left] (5,3)
to [out=right, in=up] (8,0)
to [out=down, in=right] (5,-3)
to [out=left, in=down] (2,-0.5)
to [out=up, in=left] (4,1)
to [out=right, in=up] (6,-0.5)
to [out=down, in=right] (3,-3)
to [out=left, in=down] (0,0);

\strand[thick, blue] (4,0)
to [out=left, in=right] pic[pos=0.4,sloped]{arrow}(2.7,1.5)
to [out=left, in=left] (4,0);

\strand[thick, blue] (4,0)
to [out=right, in=135]  pic[pos=0.9,sloped]{arrow}(6.5,1.8)
to [out=315, in=right] (4,0);
\flipcrossings{3,4,7,9,10}

\end{knot}
\node[blue] at (3.4,1.7) {$\rho(g)$};
\node[blue] at (7.0,1.0) {$\rho(h)$};
\filldraw[blue] (4,0) circle (3pt);
\end{tikzpicture}
\caption{The Figure-8 Knot and generators for the knot group after application of $\rho$.}
\label{fig:Figure8-p}
\end{figure}

The induced action on $\sh A$ is given by
\[
   m_g \mapsto m_g^{-1} m_h m_g, \qquad m_h \mapsto m_h m_g^{-1} m_h m_g m_h^{-1}.
\]
We restrict to the fraction field $F$. Here there is an induced action $\rho_* : A \to A$ given by the same formulas. Since $\rho_*$ is an automorphism of a quaternion algebra, the Skolem--Noether theorem asserts it is given by conjugation by some $c \in A^\times$. That is, there exists some invertible (i.e., nonzero) $c$ for which
\[ c m_g  =  \rho_*(m_g) c , \qquad c m_h  =  \rho_*(m_h) c, \]
which amounts to a system of linear equations for the coefficients of $c$ over the field $F$. By writing everything in terms of $1,i,j, ij$ and using a computer algebra package, this system may be solved in order to find a nonzero element $c$. One such element is given by
\[ c = \frac{-4I_{gh}^2 + 16 I_{gh} - 16}{I_{gh}^2 - 3 I_{gh} + 3} i + \frac{-I_{gh}^2 + 5 I_{gh} -
    7}{I_{gh}^2 - 3 I_{gh} + 3} I_g j + k. \]
Once $c$ has been found, we apply Theorem \ref{th:main} by calculating the valuation of $\Nrd(c)$ associated with either of the points in $X_0 \sm W$, i.e., where $I_{gh}=I_g^2-2$. Again, this can be carried out using a computer algebra package. We see that the valuation of one such $c$, and therefore any such $c$, is even, so that Theorem \ref{th:main} tells us that the Azumaya algebra with $C_2$-action $\sh A$ extends to $X_0$ as an algebra with $C_2$-action.

\subsubsection{The $2$-periodicities}
\label{sec:2-period}

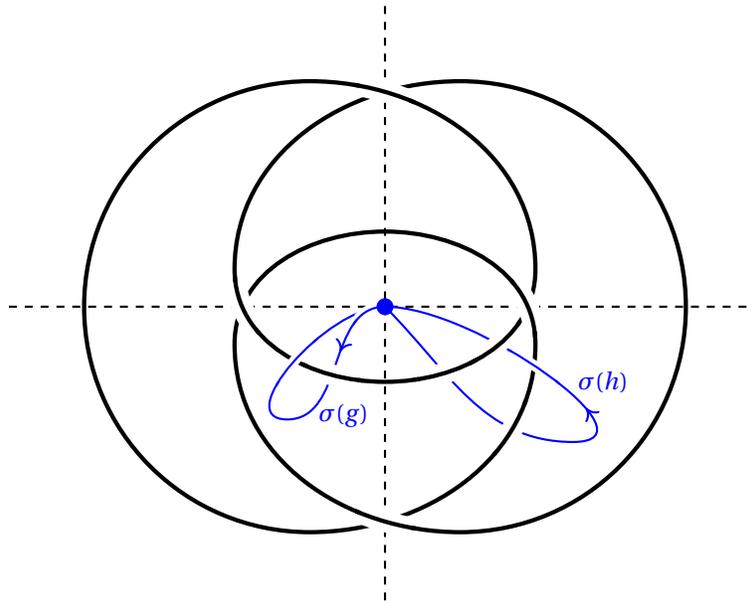
\begin{figure}[h]
  \centering
  \begin{tikzpicture}[pics/arrow/.style={code={%
  \draw[line width=0pt,{Computer Modern Rightarrow[line width=0.8pt,width=1.5ex,length=1ex]}-] (-0.5ex,0) -- (0.5ex,0);
}}]

\draw [thick, dashed] (4,4)--(4,-4);

\draw [thick, dashed] (-1,0)--(9,0);

  \begin{knot}[
    consider self intersections=no splits,
    ignore endpoint intersections=true,
  clip width=5,
  clip radius=10pt,
  end tolerance=1pt 
  ]

\strand[ultra thick] (0,0)
to [out=up, in=left]  (3,3)
to [out=right, in=up]  (6,0.5)
to [out=down, in=right] (4,-1)
to [out=left, in=down] (2,0.5)
to [out=up, in=left] (5,3)
to [out=right, in=up] (8,0)
to [out=down, in=right] (5,-3)
to [out=left, in=down] (2,-0.5)
to [out=up, in=left] (4,1)
to [out=right, in=up] (6,-0.5)
to [out=down, in=right] (3,-3)
to [out=left, in=down] (0,0);

\strand[thick, blue] (4,0)
to [out=left, in=right] pic[pos=0.4,sloped]{arrow}(2.7,-1.5)
to [out=left, in=left] (4,0);

\strand[thick, blue] (4,0)
to [out=right, in=right]  pic[pos=0.7,sloped]{arrow}(6.5,-1.8)
to [out=left, in=315] (4,0);
\flipcrossings{2,7,9}

\end{knot}
\node[blue] at (3.45,-1.45) {$\sigma(g)$};
\node[blue] at (6.9,-1.0) {$\sigma(h)$};
\filldraw[blue] (4,0) circle (3pt);
\end{tikzpicture}
\caption{The Figure-8 Knot and generators for the knot group after application of  $\sigma$.}
  \label{fig:Figure8-s}
\end{figure}

The 2-periodicity $\sigma$ acts on $\Gamma$ in the following way:
\begin{equation}
  \label{eq:sigma}
    \sigma : g \mapsto g^{-1}, \qquad  h \mapsto g^{-2}h^{-1}g^2.
  \end{equation}
This is determined by direct calculation from the diagrams in Figures \ref{fig:Figure8} and \ref{fig:Figure8-s}.

The automorphism of $A$ induced by $\sigma$ is given by conjugation by some element $c \in A^\times$ with the property that
\begin{equation}
  m_g c = cm_g^{-1}, \quad m_h c = cm_g^{-2}m_h^{-1}m_g^2.\label{eq:15sigma}
\end{equation}
As in the case of the strong inversion, equation \eqref{eq:15sigma} gives us $8$ linear equations for the $4$ coefficients of
\[ c = c_0 + c_1 i + c_2 j + c_3 ij \in A.\]
This may again be solved by use of a computer algebra package. One such value is
\[ c= \frac{-I_{gh}^2 + 3 I_{gh} - 3}{I_{gh}^2 - I_{gh} - 1} I_g j + k \] and the valuation of $\Nrd(c)$ at $X_0\sm W$ can be computed. In this case, we discover that the valuation is odd. From Theorem \ref{th:main}, we therefore deduce that the algebra $\sh A$ with $C_2$-action $\sigma_*$ does not extend over $X_0$ as an algebra with $C_2$-action, although it does extend without the action.

For the other $2$-periodicity, $\rho\sigma=\sigma\rho$, we may argue as follows: the conjugating element $c$ in this case may be taken to be the product of the conjugating elements for $\rho$ and $\sigma$. Since the reduced norm and the valuations are multiplicative, we see that the valuation of this conjugating element is odd at $X_0 \sm W$. Therefore, the algebra also fails to extend with the $C_2$-action $(\sigma \rho)_\ast$.

\printbibliography

\end{document}